\newtheorem{thm}{Theorem}[section]
\newtheorem*{thm-non}{Theorem}
\newtheorem{lem}[thm]{Lemma}
\newtheorem{cor}[thm]{Corollary}
\theoremstyle{definition}
\newtheorem{defi}[thm]{Definition}
\newtheorem{rem}[thm]{Remark}
\begin{document}

\title{Hermitian Azumaya modules and arithmetic Chern classes}

\author{Fabian Reede}
\address{Department of Mathematics and Computer Studies\\Mary Immaculate College\\South Circular
  Road\\Limerick\\Ireland}
\address{Leibniz Universität Hannover, Institut für algebraische Geometrie, Welfengarten 1, 30167 Hannover}
\email{reede@math.uni-hannover.de}

\thanks{Part of this work was done during the author's stay at Mary Immaculate College in Limerick. This stay was supported by a research fellowship of the Deutsche Forschungsgemeinschaft (DFG)}
\subjclass[2010]{14G40, 14J60, 16H05, 11E39 }
\maketitle

\begin{abstract}
We compute arithmetic Chern classes of sheaves on an arithmetic surface $X$ associated to a Hermitian Azumaya algebra.
\end{abstract}

\section*{Introduction}
Let $\pi: X\rightarrow Y$ be an arithmetic surface and let $\mathcal{A}$ be an Azumaya algebra on $X$. In \cite{reede} we introduced the notion of Hermitian Azumaya algebras and Hermitian Azumaya modules by equipping certain sheaves with Hermitian metrics. Using this, we defined a Deligne pairing $\overline{\left\langle \mathcal{M},\mathcal{N} \right\rangle_\mathcal{A} }$ for Hermitian $\mathcal{A}$-line bundles $\overline{\mathcal{M}}$ and $\overline{\mathcal{N}}$, which generalized the classical Deligne pairing for line bundles on $X$.

In this note we introduce arithmetic Chern classes for Hermitian Azumaya modules and compute the first arithmetic Chern class of the Deligne pairing for $\mathcal{A}$-line bundles.

Our main result shows, that we can compute the first arithmetic Chern class of the $\mathcal{A}$-Deligne pairing in terms of the arithmetic $\mathcal{A}$-Chern classes. Explicitly is says:
\begin{equation*}
\widehat{c}_1(\overline{\left\langle \mathcal{M},\mathcal{N} \right\rangle_\mathcal{A} })=-\pi_{*}(\widehat{c}_1^{\mathcal{A}}(\overline{\mathcal{M}})\,\widehat{c}_1^{\mathcal{A}}(\overline{\mathcal{N}})).
\end{equation*}

The structure of this paper is as follows: In section \ref{sec1} we recall some facts about Hermitian vector space, the construction of Hermitian inner products on associated vector spaces and isometric vector spaces. In section \ref{sec2} we recall the definition of Hermitian Azumaya algebras and compute the first arithmetic Chern class of a Hermitian Azumaya algebra. In the final section \ref{sec3} we introduce arithmetic Chern classes for Hermitian Azumaya modules and proof the main result.

\section{Hermitian vector spaces}\label{sec1}
\begin{defi}
A Hermitian vector space $\overline{V}$ is a pair $(V,h)$, where $V$ is a finite dimensional $\mathbb{C}$-vector space and $h: V\times V\rightarrow \mathbb{C}$ is a Hermitian inner product.
\end{defi}
The inner product $h$ induces the so called Riesz isomorphism between $V$ and the dual space $V^{\vee}=Hom_{\mathbb{C}}(V,\mathbb{C})$ defined by
\begin{equation*}
\theta_V: V\rightarrow V^{\vee}, v\mapsto h(-,v).
\end{equation*}
Note that this a conjugate linear isomorphism. Using this isomorphism we define the associated dual hermitian vector space $\overline{V^{\vee}}$ to be the pair $(V^{\vee},h^{\vee})$, where $h^{\vee}$ is the induced dual inner product defined by:
\begin{equation*}
h^{\vee}(f,f'):=\overline{h(\theta_V^{-1}(f),\theta_V^{-1}(f'))}\,\,\,\text{for}\,\,\, f,f'\in V^{\vee}
\end{equation*}
This inner product gives the Riesz isomorphism $\theta_{V^{\vee}}:V^{\vee}\rightarrow V^{\vee\vee}$.

Iterating this construction we get the induced bidual Hermitian vector space $\overline{V^{\vee\vee}}=(V^{\vee\vee},h^{\vee\vee})$.

Given two Hermitian vector space $\overline{V}=(V,h)$ and $\overline{W}=(W,k)$ we say $\Psi: V\rightarrow W$ induces an isometry of Hermitian vector spaces if $\Psi$ is an isomorphism of $\mathbb{C}$-vector spaces and further more $\Psi^{*}k=h$, that is we have, for any two $v,v'\in V$ the equality $h(v,v')=k(\Psi(v),\Psi(v'))$. 

\begin{lem}
The natural isomorphism $\iota: V\rightarrow V^{\vee\vee}$ induces an isometry of Hermitian vector spaces
\begin{equation*}
\iota: \overline{V} \xrightarrow{\sim} \overline{V^{\vee\vee}}.
\end{equation*}
\end{lem}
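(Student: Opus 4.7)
The plan is to show that the composition of the two Riesz isomorphisms is precisely the natural evaluation map $\iota$, and then to observe that the two conjugations arising in the definitions of $h^{\vee}$ and $h^{\vee\vee}$ cancel out exactly.

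First I would verify that $\theta_{V^{\vee}} \circ \theta_V = \iota$ as maps $V \to V^{\vee\vee}$. By definition, for $v \in V$ the functional $\theta_V(v)$ sends $u \mapsto h(u,v)$. Applying $\theta_{V^{\vee}}$ yields the functional $f \mapsto h^{\vee}(f, \theta_V(v))$ on $V^{\vee}$. Writing $f = \theta_V(w)$ for the (unique) $w = \theta_V^{-1}(f)$, the right-hand side becomes $\overline{h(w,v)} = h(v,w) = f(v) = \iota(v)(f)$. Note that the composition is $\mathbb{C}$-linear because it is the composition of two conjugate-linear maps, which is consistent with $\iota$ being $\mathbb{C}$-linear.

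Next, using $\theta_{V^{\vee}}^{-1}(\iota(v)) = \theta_V(v)$ together with the definition of $h^{\vee\vee}$, I would compute
\begin{equation*}
h^{\vee\vee}(\iota(v),\iota(v')) = \overline{h^{\vee}(\theta_V(v),\theta_V(v'))} = \overline{\,\overline{h(v,v')}\,} = h(v,v'),
\end{equation*}
where the middle equality uses the definition of $h^{\vee}$ and the fact that $\theta_V^{-1}\theta_V = \mathrm{id}$. Since $\iota$ is the usual $\mathbb{C}$-linear isomorphism between a finite-dimensional vector space and its bidual, this completes the verification of the isometry.

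I do not anticipate any real obstacle here; the only point that requires care is tracking the conjugate-linearity of the Riesz isomorphism and making sure that the identification $\theta_{V^{\vee}} \circ \theta_V = \iota$ is checked directly from the definitions (rather than assumed from a finite-dimensional reflex). Once that identification is in hand, the isometry follows from cancelling the two conjugations.
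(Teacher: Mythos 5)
Your proof is correct and follows essentially the same route as the paper: first identify $\iota = \theta_{V^{\vee}}\circ\theta_V$ by a direct computation, then let the two conjugations in $h^{\vee}$ and $h^{\vee\vee}$ cancel to get $\iota^{*}h^{\vee\vee}=h$. You even spell out the second step, which the paper leaves as a one-line remark.
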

\begin{proof}
We note that we have $\iota = \theta_{V^{\vee}}\circ\theta_V$. This can be see as follows: for arbitrary $v\in V$ and $f\in V^{\vee}$ we have
\begin{align*}
((\theta_{V^{\vee}}\circ\theta_V)(v))(f)&=h^{\vee}(f,\theta_V(v))\\
&=\overline{h(\theta_V^{-1}(f),\theta_V^{-1}(\theta_V(v)))}\\
&=h(v,\theta_V^{-1}(f))\\
&=f(v)=\iota(v)(f).
\end{align*}
Using this fact we can compute that we have $\iota^{*}h^{\vee\vee}=h$, hence $\iota$ is an isometry between $\overline{V}$ and $\overline{V^{\vee\vee}}$.
\end{proof}
Equip the tensor product $V\otimes W$ with the induced tensor product metric $h\otimes k$ defined on elementary tensors by:
\begin{equation*}
h\otimes k(v\otimes w, v'\otimes w'):=h(v,v')k(w,w').
\end{equation*}
This construction defines the Hermitian vector space $\overline{V\otimes W}$. 
\begin{rem}
The Riesz isomorphism $\theta_{V\otimes W}$ induced by $h\otimes k$ has the following decomposition:
\begin{equation*}
\theta_{V\otimes W}=\alpha \circ (\theta_V\otimes\theta_W).
\end{equation*}
Here $\alpha: V^{\vee}\otimes W^{\vee}\rightarrow (V\otimes W)^{\vee}$ is the natural isomorphism defined on elementary tensors by 
\begin{equation*}
f\otimes g\mapsto(v\otimes w \mapsto f(v)g(w)).
\end{equation*}
\end{rem}

\begin{lem}\label{dual}
The natural isomorphism $\alpha: V^{\vee}\otimes W^{\vee}\rightarrow (V\otimes W)^{\vee}$ induces an isometry of Hermitian vector spaces
\begin{equation*}
\alpha: \overline{V^{\vee}\otimes W^{\vee}}\xrightarrow{\sim}\overline{(V\otimes W)^{\vee}}.
\end{equation*}
\end{lem}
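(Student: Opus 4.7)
The plan is to reduce the claim to a direct computation on elementary tensors, using the decomposition $\theta_{V\otimes W}=\alpha\circ(\theta_V\otimes\theta_W)$ from the preceding remark. Since both $\alpha^{*}(h\otimes k)^{\vee}$ and $h^{\vee}\otimes k^{\vee}$ are sesquilinear on $V^{\vee}\otimes W^{\vee}$, it suffices to check equality on pure tensors $f\otimes g, f'\otimes g'\in V^{\vee}\otimes W^{\vee}$.

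First I would write out the definition of the dual metric on $(V\otimes W)^{\vee}$:
\begin{equation*}
(h\otimes k)^{\vee}(F,F')=\overline{(h\otimes k)\bigl(\theta_{V\otimes W}^{-1}(F),\theta_{V\otimes W}^{-1}(F')\bigr)}.
\end{equation*}
Inverting the decomposition from the remark gives $\theta_{V\otimes W}^{-1}\circ\alpha=(\theta_V\otimes\theta_W)^{-1}=\theta_V^{-1}\otimes\theta_W^{-1}$. Applying this with $F=\alpha(f\otimes g)$ and $F'=\alpha(f'\otimes g')$ produces
\begin{equation*}
\alpha^{*}(h\otimes k)^{\vee}(f\otimes g,f'\otimes g')=\overline{(h\otimes k)\bigl(\theta_V^{-1}(f)\otimes\theta_W^{-1}(g),\theta_V^{-1}(f')\otimes\theta_W^{-1}(g')\bigr)}.
\end{equation*}

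Next I would expand the right-hand side using the definition of $h\otimes k$ on elementary tensors and the multiplicativity of complex conjugation, obtaining
\begin{equation*}
\overline{h(\theta_V^{-1}(f),\theta_V^{-1}(f'))}\cdot\overline{k(\theta_W^{-1}(g),\theta_W^{-1}(g'))}=h^{\vee}(f,f')\,k^{\vee}(g,g'),
\end{equation*}
which is exactly $(h^{\vee}\otimes k^{\vee})(f\otimes g,f'\otimes g')$. Since $\alpha$ is already known to be a $\mathbb{C}$-linear isomorphism, this establishes the isometry property and hence the lemma.

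I don't expect a real obstacle here; the only subtle point is keeping track of the conjugate linearity of the Riesz maps, which is what forces the outer bar on the dual metric and makes the multiplicativity of conjugation the crucial ingredient. The remark is doing all the conceptual work by identifying $\theta_{V\otimes W}$ with $\alpha\circ(\theta_V\otimes\theta_W)$; once that is in hand, the verification is mechanical.
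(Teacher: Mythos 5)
Your proposal is correct and follows essentially the same route as the paper: both invert the decomposition $\theta_{V\otimes W}=\alpha\circ(\theta_V\otimes\theta_W)$ and verify $\alpha^{*}(h\otimes k)^{\vee}=h^{\vee}\otimes k^{\vee}$ on elementary tensors via the multiplicativity of conjugation. Your explicit remark that sesquilinearity reduces the check to pure tensors is a small point the paper leaves implicit, but the argument is otherwise identical.
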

\begin{proof}
We have to show that $\alpha^{*}(h\otimes k)^{\vee}=h^{\vee}\otimes k^{\vee}$. Using the aforementioned decomposition of $\theta_{V\otimes W}$ we have:
\begin{align*}
\alpha^{*}(h\otimes k)^{\vee}(f\otimes g,f'\otimes g')&=\overline{h\otimes k(\theta_{V\otimes W}^{-1}(\alpha(f\otimes g)),\theta_{V\otimes W}^{-1}(\alpha(f'\otimes g')))}\\
&=\overline{h\otimes k(\theta_V^{-1}\otimes\theta_W^{-1}(f\otimes g),\theta_V^{-1}\otimes\theta_W^{-1}(f'\otimes g'))}\\
&=\overline{h(\theta_V^{-1}(f),\theta_V^{-1}(f'))}\,\,\,\overline{k(\theta_W^{-1}(g),\theta_W^{-1}(g'))}\\
&=h^{\vee}(f,f')\,\,\,k^{\vee}(g,g')\\
&=h^{\vee}\otimes k^{\vee}(f\otimes g, f'\otimes g').
\end{align*}
\end{proof}
\begin{rem}\label{twist}
We also note that the natural isomorphism 
\begin{equation*}
V\otimes W \xrightarrow{\sim} W\otimes V
\end{equation*}
induces an isometry of Hermitian vector spaces.
\end{rem}

Composing all these natural isomorphisms we get a natural isomorphism
\begin{equation}\label{isos}
V^{\vee}\otimes W\cong V^{\vee}\otimes W^{\vee\vee}\cong W^{\vee\vee}\otimes V^{\vee} \cong (W^{\vee}\otimes V)^{\vee}
\end{equation}
which, as we showed, induces in fact an isometry of Hermitian vector spaces:
\begin{equation*}
\overline{V^{\vee}\otimes W}\xrightarrow{\sim} \overline{(W^{\vee}\otimes V)^{\vee}}.
\end{equation*}
Using the natural isomorphisms 
\begin{equation*}
V^{\vee}\otimes W\cong Hom(V,W)\,\,\text{and}\,\,W^{\vee}\otimes V\cong Hom(W,V)\,\,,
\end{equation*}
the isomorphism \ref{isos} gives rise to a natural isomorphism
\begin{equation*}
Hom(V,W) \xrightarrow{\sim} Hom(W,V)^{\vee}.
\end{equation*}
It is well known that this isomorphism is nothing but the trace map, i.e. it comes from the perfect pairing:
\begin{equation*}
Hom(V,W) \times Hom(W,V) \xrightarrow{(-)\circ(-)} Hom(V,V)\xrightarrow{\text{tr}} \mathbb{C}
\end{equation*} 
and it maps $\phi: V\rightarrow W\in Hom(V,W)$ to $\text{tr}((-)\circ \phi)\in Hom(W,V)^{\vee}$.

The natural isomorphism $V^{\vee}\otimes W\xrightarrow{\sim} Hom(V,W)$ also defines a Hermitian inner product on $Hom(V,W)$ which in turn defines the Hermitian vector space $\overline{Hom(V,W)}$.

Putting all these steps together we have proven:
\begin{thm}\label{trace}
The trace map $\text{tr}: Hom(V,W) \rightarrow Hom(W,V)^{\vee}$ induces an isometry of Hermitian vector spaces
\begin{equation*}
\text{tr}: \overline{Hom(V,W)}\xrightarrow{\sim} \overline{Hom(W,V)^{\vee}}.
\end{equation*}
\end{thm}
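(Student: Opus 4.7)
The plan is to assemble what has already been done. The chain of natural isomorphisms displayed in (\ref{isos}) is built step by step from maps that have each been shown to be isometries, so it only remains to identify the composite, transported to the $Hom$-picture, with the trace map.

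More precisely, first I would note that the map $V^{\vee}\otimes W\to V^{\vee}\otimes W^{\vee\vee}$, namely $\mathrm{id}\otimes\iota$, is an isometry by combining the biduality lemma with the definition of the tensor product metric. The swap $V^{\vee}\otimes W^{\vee\vee}\cong W^{\vee\vee}\otimes V^{\vee}$ is an isometry by Remark \ref{twist}, and the last map $W^{\vee\vee}\otimes V^{\vee}\to (W^{\vee}\otimes V)^{\vee}$ is an instance of Lemma \ref{dual}, applied with $W^{\vee}$ and $V$ in place of $V$ and $W$. Composing these three isometries yields the isometry $\overline{V^{\vee}\otimes W}\xrightarrow{\sim}\overline{(W^{\vee}\otimes V)^{\vee}}$ flagged in the text just before the theorem.

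Next I would translate this into the $Hom$-picture. The metric on $Hom(V,W)$ is defined precisely so that the natural isomorphism $V^{\vee}\otimes W\cong Hom(V,W)$ is an isometry, and similarly for $W^{\vee}\otimes V\cong Hom(W,V)$; dualizing the latter isometry (an operation that preserves the Hermitian structure by Lemma \ref{dual} applied to the relevant factors) gives $(W^{\vee}\otimes V)^{\vee}\cong Hom(W,V)^{\vee}$ as Hermitian vector spaces. Splicing everything together produces the desired isometry $\overline{Hom(V,W)}\xrightarrow{\sim}\overline{Hom(W,V)^{\vee}}$.

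The only remaining point is to verify that the underlying $\mathbb{C}$-linear map $Hom(V,W)\to Hom(W,V)^{\vee}$ is indeed the trace $\phi\mapsto \text{tr}((-)\circ\phi)$; this identification is taken as known in the text above the theorem, and amounts to a direct computation on an elementary tensor $f\otimes w$ representing the rank-one operator $\phi_{f,w}\colon v\mapsto f(v)w$. Chasing $f\otimes w$ through the chain produces the functional on $Hom(W,V)$ that sends $\psi_{g,v}\colon w'\mapsto g(w')v$ to $f(v)g(w)$, which is exactly $\text{tr}(\psi_{g,v}\circ\phi_{f,w})$. The main obstacle, and the one place where care is genuinely needed, is tracking the conjugate-linearity of the Riesz isomorphisms through the chain of identifications: several conjugations enter via $\theta_V$, $\theta_W$ and the dual metric construction, and one must verify that they cancel so that the final map is honestly $\mathbb{C}$-linear and equal to the trace itself rather than to its complex conjugate.
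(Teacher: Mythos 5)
Your proposal is correct and follows essentially the same route as the paper: the theorem there is likewise obtained by composing the biduality isometry, Remark \ref{twist}, and Lemma \ref{dual} (applied to $W^{\vee}$ and $V$) along the chain (\ref{isos}), transporting the metrics to $Hom(V,W)$ and $Hom(W,V)^{\vee}$, and invoking the standard identification of the resulting map with the trace pairing. Your extra check on the rank-one tensors $f\otimes w$ and the remark on tracking conjugate-linearity are sound refinements of what the paper leaves implicit.
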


\begin{rem}
All results in this section can be immediately generalized to Hermitian vector bundles, that is pairs $(E,h)$, where $E$ is a complex vector bundle on a smooth manifold and $h$ is a Hermitian metric on $E$, that is a Hermitian inner product on each fiber.
\end{rem}

\section{Hermitian Azumaya algebras}\label{sec2}
\begin{defi}
We call a two-dimensional integral regular projective flat $\mathbb{Z}$-scheme $X$  an arithmetic surface and denote the structure morphism by $\pi: X\rightarrow Y$, here we define $Y:=Spec(\mathbb{Z})$.
\end{defi}

We denote the generic fiber of $\pi$ by $X_{\mathbb{Q}}$ and the base change to $\mathbb{C}$ by $X_{\mathbb{C}}$. Both $X_\mathbb{Q}$ and $X_\mathbb{C}$ are smooth curves over $\mathbb{Q}$ resp. $\mathbb{C}$.

Given a sheaf of $\mathcal{O}_X$-modules $\mathcal{F}$, then we denote the induced sheaves on $X_\mathbb{Q}$ and $X_\mathbb{C}$ by $\mathcal{F}_\mathbb{Q}$ and $\mathcal{F}_\mathbb{C}$. 

The associated Riemann surface $S$ of $X$ is given by $S=X_\mathbb{C}(\mathbb{C})$. $S$ comes endowed with a Hermitian metric $\omega$, which is Kähler. If $\mathcal{F}$ is a locally free $\mathcal{O}_X$-module, then we denote the induced vector bundle on $S$ by $F$.
\begin{defi}
A Hermitian vector bundle $\overline{\mathcal{E}}$ on $X$ is a pair $(\mathcal{E},h)$, where $\mathcal{E}$ is a locally free sheaf on $X$ and $h$ is a Hermitian metric on the induced vector bundle $E$ on $S$, which is invariant under the complex conjugation on $S$.
\end{defi}

\begin{rem}
If we use a Hermitian metric in the following, we will always assume that the metric is invariant with respect to the complex conjugation on $S$, see \cite[Definition IV.4.1.4.]{soul3}.
\end{rem}

Let $\mathcal{A}$ be an Azumaya algebra on the arithmetic surface $X$, that is a matrix algebra in the \'etale topology, then we have
\begin{equation}\label{morita}
\mathcal{A}_\mathbb{C}\cong \mathcal{E}nd_{\mathcal{O}_{X_\mathbb{C}}}(\mathcal{E})
\end{equation}
for some locally free sheaf $\mathcal{E}$ on $X_\mathbb{C}$. This fact is due to Tsen's theorem. The sheaf $\mathcal{E}$ induces a vector bundle $E$ on $S$, especially we have $A\cong End(E)\cong E^{\vee}\otimes E$ on $S$. 

Choosing a Hermitian metric $h$ on $E$ induces the metric $h^{\vee}\otimes h$ on $E^{\vee}\otimes E$, and hence also a metric $h^{\mathcal{A}}$ on $A$. This leads to the following definition:

\begin{defi}
Let $X$ be an arithmetic surface. A Hermitian Azumaya algebra $\overline{\mathcal{A}}$ over $X$ is a pair $(\mathcal{A},h^{\mathcal{A}})$, here $\mathcal{A}$ is Azumaya algebra on $X$ and $h^{\mathcal{A}}$ a Hermitian metric on the associated vector bundle constructed as described above.
\end{defi}

\begin{thm}
Let $X$ be an arithmetic surface and let $\overline{\mathcal{A}}$ be a Hermitian Azumaya algebra on $X$, then we have:
\begin{equation*}
2\,\widehat{c}_1(\overline{\mathcal{A}})=0 \in \widehat{CH^{1}}(X).
\end{equation*}
\end{thm}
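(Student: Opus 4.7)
The plan is to exploit the trace-induced self-duality of $\mathcal{A}$ and the standard fact that taking duals negates the first arithmetic Chern class. First, I would recall that $\widehat{c}_1$ of a Hermitian locally free sheaf $\overline{\mathcal{F}}$ of rank $r$ on $X$ is defined through the determinant bundle as $\widehat{c}_1(\overline{\mathcal{F}})=\widehat{c}_1(\det\overline{\mathcal{F}})$, where $\det\overline{\mathcal{F}}$ is equipped with the induced metric. Since $\det(\mathcal{F}^{\vee})\cong(\det\mathcal{F})^{\vee}$ isometrically, and since $\widehat{c}_1$ is a group homomorphism for which dualizing a Hermitian line bundle negates its arithmetic first Chern class, we get the identity
\begin{equation*}
\widehat{c}_1(\overline{\mathcal{F}^{\vee}})=-\widehat{c}_1(\overline{\mathcal{F}}).
\end{equation*}

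The core step is to produce an isometry $\overline{\mathcal{A}}\xrightarrow{\sim}\overline{\mathcal{A}^{\vee}}$ of Hermitian locally free sheaves on $X$. Algebraically, because $\mathcal{A}$ is Azumaya, the reduced trace pairing
\begin{equation*}
\mathcal{A}\otimes_{\mathcal{O}_X}\mathcal{A}\xrightarrow{(a,b)\mapsto\text{tr}(ab)}\mathcal{O}_X
\end{equation*}
is perfect, yielding a natural $\mathcal{O}_X$-linear isomorphism $\mathcal{A}\xrightarrow{\sim}\mathcal{A}^{\vee}$. Analytically, via the Morita identification $A\cong\mathcal{H}\!om(E,E)$ on the Riemann surface $S$, Theorem \ref{trace} (in the vector bundle form granted by the final Remark of Section \ref{sec1}) states precisely that the trace map is an isometry $\overline{\mathcal{H}\!om(E,E)}\xrightarrow{\sim}\overline{\mathcal{H}\!om(E,E)^{\vee}}$ with respect to the metric $h^{\vee}\otimes h$, which is by definition the metric $h^{\mathcal{A}}$. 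Since the algebraic reduced trace of $\mathcal{A}_{\mathbb{C}}$ coincides under \eqref{morita} with the usual endomorphism trace, the global algebraic isomorphism $\mathcal{A}\cong\mathcal{A}^{\vee}$ analytifies to the isometry of Theorem \ref{trace}, and thus is an isometry of Hermitian sheaves on $X$.

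Combining the two steps gives
\begin{equation*}
\widehat{c}_1(\overline{\mathcal{A}})=\widehat{c}_1(\overline{\mathcal{A}^{\vee}})=-\widehat{c}_1(\overline{\mathcal{A}}),
\end{equation*}
whence $2\,\widehat{c}_1(\overline{\mathcal{A}})=0$ in $\widehat{CH^{1}}(X)$. The main potential obstacle is verifying that the purely algebraic self-duality of the Azumaya algebra on $X$ agrees, after base change to $\mathbb{C}$ and passage to $S$, with the Hermitian self-duality produced by Theorem \ref{trace}; this is a compatibility check that ultimately reduces to the observation that both constructions are the trace of composition on an endomorphism bundle, and that the Hermitian structure $h^{\mathcal{A}}$ on $A$ was defined precisely through the Morita identification $A\cong E^{\vee}\otimes E$.
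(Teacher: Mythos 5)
Your proposal is correct and follows essentially the same route as the paper: the trace pairing gives an isomorphism $\mathcal{A}\xrightarrow{\sim}\mathcal{A}^{\vee}$ which, by Theorem \ref{trace} and the definition of $h^{\mathcal{A}}$ via the Morita identification $A\cong E^{\vee}\otimes E$, is an isometry of Hermitian bundles, so $\widehat{c}_1(\overline{\mathcal{A}})=\widehat{c}_1(\overline{\mathcal{A}^{\vee}})=-\widehat{c}_1(\overline{\mathcal{A}})$. The only cosmetic difference is that you justify $\widehat{c}_1(\overline{\mathcal{F}^{\vee}})=-\widehat{c}_1(\overline{\mathcal{F}})$ via determinant line bundles while the paper cites the corresponding property from Gillet--Soul\'e directly.
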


\begin{proof}
The trace pairing for the algebra $\mathcal{A}$ on $X$, given by
\begin{equation*}
\text{tr}: \mathcal{A} \rightarrow \mathcal{A}^{\vee},\,\,\, a\mapsto (b\mapsto \text{tr}(ba)),
\end{equation*}
is an isomorphism. This is can be checked \'etale locally, where it reduces to the fact that $\mathcal{A}$ becomes a matrix algebra and this algebra is self-dual with respect to the trace map.

Now the Hermitian Azumaya algebra $\overline{\mathcal{A}}$ induces a Hermitian vector bundle $\overline{\mathcal{A}^{\vee}}$ via the dual metric. Theorem \ref{trace} says that the trace induces an isometric isomorphism 
\begin{equation*}
\text{tr}: A\cong End(E)\xrightarrow{\sim} End(E)^{\vee}\cong A^{\vee}.
\end{equation*}
by our choice of the metric on the Hermitian Azumaya algebra.

We especially have
\begin{equation*}
\text{tr}^{*}((h^{\mathcal{A}})^{\vee})=h^{\mathcal{A}},
\end{equation*}
i.e. $h^{\mathcal{A}}$ is the metric induced by the trace map and $(h^{\mathcal{A}})^{\vee}$.

On the one hand we now compute
\begin{equation*}
\widehat{c}_1(\overline{\mathcal{A}^{\vee}})=\widehat{c}_1(\overline{\mathcal{A}}).
\end{equation*}
This follows from the above and \cite[Proposition 1.2.5, Theorem 4.8.(ii)]{soul} since $h^{\mathcal{A}}$ is the metric induced by the metric on $\mathcal{A}^{\vee}$ via the trace map. 

On the other hand we have, using \cite[4.9.]{soul},
\begin{equation*}
\widehat{c}_1(\overline{\mathcal{A}^{\vee}})=-\widehat{c}_1(\overline{\mathcal{A}}).
\end{equation*}

Combining these results gives: 
\begin{equation*}
\widehat{c}_1(\overline{\mathcal{A}})=\widehat{c}_1(\overline{\mathcal{A}^{\vee}})=-\widehat{c}_1(\overline{\mathcal{A}}),
\end{equation*}
or equivalently 
\begin{equation*}
2\,\widehat{c}_1(\overline{\mathcal{A}})=0.
\end{equation*}
\end{proof}
\begin{cor}
Let $X$ be an arithmetic surface and let $\overline{\mathcal{A}}$ be a Hermitian Azumaya algebra on $X$, then $\widehat{c}_1(\overline{\mathcal{A}})=0$ if $\widehat{CH^1}(X)$ is torsion-free.
\end{cor}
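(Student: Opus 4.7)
The plan is very short because this corollary is an immediate consequence of the theorem preceding it. The theorem establishes that $2\,\widehat{c}_1(\overline{\mathcal{A}})=0$ holds in $\widehat{CH^1}(X)$ for any Hermitian Azumaya algebra on an arithmetic surface. So the only thing left to exploit is the hypothesis on the arithmetic Chow group.

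First I would set $\xi := \widehat{c}_1(\overline{\mathcal{A}}) \in \widehat{CH^1}(X)$. By the preceding theorem we have $2\xi = 0$, so $\xi$ is a $2$-torsion element of $\widehat{CH^1}(X)$. Now invoke the hypothesis: if $\widehat{CH^1}(X)$ is torsion-free, the only element annihilated by $2$ is $0$, hence $\xi = 0$. This is the entire argument.

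There is really no obstacle here; the work was done in the preceding theorem (where the isometry from Theorem \ref{trace} together with \cite[Proposition 1.2.5, Theorem 4.8.(ii)]{soul} and \cite[4.9.]{soul} were combined to force $\widehat{c}_1$ to equal its own negative). The corollary merely packages the observation that being $2$-torsion and lying in a torsion-free group forces vanishing. Consequently I would present it as a one-line proof, citing the previous theorem and the torsion-free assumption.
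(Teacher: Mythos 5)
Your argument is correct and is exactly the one the paper intends: the preceding theorem gives $2\,\widehat{c}_1(\overline{\mathcal{A}})=0$, and torsion-freeness of $\widehat{CH^1}(X)$ forces the class to vanish. Nothing more is needed.
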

\begin{cor}
Let $X$ be an arithmetic surface and let $\overline{\mathcal{A}}$ be a Hermitian Azumaya algebra on $X$, then $\widehat{c}_1(\overline{\mathcal{A}})=0\in \widehat{CH^1}(X)_\mathbb{Q}$, especially $\widehat{ch}(\overline{\mathcal{A}})=rk(\mathcal{A})-\widehat{c}_2(\overline{\mathcal{A}})\in \widehat{CH}(X)_\mathbb{Q}$.
\end{cor}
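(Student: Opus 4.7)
The plan is to extract the corollary from the preceding theorem in two steps, the first being immediate and the second a short computation with the standard formula for the arithmetic Chern character.

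First I would invoke the preceding theorem, which gives $2\,\widehat{c}_1(\overline{\mathcal{A}})=0$ in $\widehat{CH^1}(X)$. Since the arithmetic Chow group $\widehat{CH^1}(X)_\mathbb{Q}$ is obtained from $\widehat{CH^1}(X)$ by tensoring with $\mathbb{Q}$, all $2$-torsion is killed, so the image of $\widehat{c}_1(\overline{\mathcal{A}})$ in $\widehat{CH^1}(X)_\mathbb{Q}$ vanishes. This settles the first claim.

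For the second claim, I would use the definition of the arithmetic Chern character in terms of the arithmetic Chern classes,
\begin{equation*}
\widehat{ch}(\overline{\mathcal{A}})=rk(\mathcal{A})+\widehat{c}_1(\overline{\mathcal{A}})+\tfrac{1}{2}\bigl(\widehat{c}_1(\overline{\mathcal{A}})^2-2\,\widehat{c}_2(\overline{\mathcal{A}})\bigr)+\ldots,
\end{equation*}
where the higher-order terms vanish because $X$ is an arithmetic surface, so $\widehat{CH^p}(X)_\mathbb{Q}=0$ for $p\geq 3$. Substituting $\widehat{c}_1(\overline{\mathcal{A}})=0$ from the first claim, both the degree-one term and the $\widehat{c}_1^2$ contribution drop out, leaving $\widehat{ch}(\overline{\mathcal{A}})=rk(\mathcal{A})-\widehat{c}_2(\overline{\mathcal{A}})$ in $\widehat{CH}(X)_\mathbb{Q}$.

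There is really no hard step here: the theorem already does the geometric work, and the corollary is essentially bookkeeping with the rationalisation map and the formal expansion of the Chern character on a two-dimensional base. The only thing worth being explicit about is that tensoring with $\mathbb{Q}$ is what makes $2\,\widehat{c}_1=0$ collapse to $\widehat{c}_1=0$, and that dimension considerations truncate the Chern character at the second Chern class.
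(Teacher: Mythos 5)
Your proof is correct and is exactly the argument the paper leaves implicit (the corollary is stated without proof): tensoring with $\mathbb{Q}$ kills the $2$-torsion from the preceding theorem, and the Chern character expansion truncates after $\widehat{c}_2$ because $\widehat{CH}^p(X)_\mathbb{Q}=0$ for $p\geq 3$ on an arithmetic surface. Both steps are stated accurately, so there is nothing to add.
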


\section{The first arithmetic Chern class of the Deligne pairing}\label{sec3}
Let $X$ be an arithmetic surface. If $\overline{\mathcal{A}}$ is a Hermitian Azumaya algebra on $X$ and $\mathcal{M}$ is a locally projective left $\mathcal{A}$-module, then using \ref{morita} and Morita equivalence, we see that
\begin{equation*}
\mathcal{M}_{\mathbb{C}}=\mathcal{E}\otimes_{\mathcal{O}_{X_{\mathbb{C}}}} \mathcal{M}'
\end{equation*}
for some locally free sheaf $\mathcal{M}'$ on $X_{\mathbb{C}}$ which induces a vector bundle $M'$ on $S$, so the induced vector bundle $M$ on $S$ is given by $M=E\otimes M'$.
 
The vector bundle $E$ still comes with the Hermitian metric $h$ and we furthermore pick a Hermitian metric $h'$ on $M'$. The tensor product metric of $h$ and $h'$ yields a Hermitian metric $h^{\mathcal{M}}:=h\otimes h'$ on $M$. This defines a Hermitian locally free sheaf $\overline{\mathcal{M}}=(\mathcal{M},h^{\mathcal{M}})$, which also has the structure of a locally projective left $\mathcal{A}$-module. This suggests the following definition:
\begin{defi}\label{hermmod}
Let $X$ be an arithmetic surface and let $\overline{\mathcal{A}}$ be a Hermitian Azumaya algebra over $X$. A Hermitian Azumaya module $\overline{\mathcal{M}}$ is a couple $(\mathcal{M},h^{\mathcal{M}})$ where $\mathcal{M}$ is a locally projective $\mathcal{A}$-module and $h^{\mathcal{M}}$ is a Hermitian metric on the associated vector bundle on $S$ chosen as described above.
\end{defi}
Assume $\overline{\mathcal{M}}$ and $\overline{\mathcal{N}}$ are two Hermitian Azumaya modules, then $\mathcal{H}om_{\mathcal{A}}(\mathcal{M},\mathcal{N})$ is locally free as an $\mathcal{O}_X$-module as both modules are locally projective over $\mathcal{A}$. We remember that we have $\mathcal{A_{\mathbb{C}}}\cong \mathcal{E}nd_{\mathcal{O}_{X_{\mathbb{C}}}}(\mathcal{E})$ for some locally free sheaf $\mathcal{E}$ on $X_{\mathbb{C}}$ and $\mathcal{M}_{\mathbb{C}}\cong \mathcal{E}\otimes \mathcal{M}'$ as well as $\mathcal{N}_{\mathbb{C}}\cong \mathcal{E}\otimes \mathcal{N}'$ by Morita equivalence. This equivalence also gives a natural isomorphism 
\begin{equation*}
\mathcal{H}om_{\mathcal{A_{\mathbb{C}}}}(\mathcal{M}_{\mathbb{C}},\mathcal{N}_{\mathbb{C}})\cong \mathcal{H}om_{\mathcal{O}_{X_\mathbb{C}}}(\mathcal{M'},\mathcal{N'}).
\end{equation*}
The vector bundle associated to the last sheaf is isomorphic to $(M')^{\vee}\otimes N'$.

This bundle comes naturally equipped with the Hermitian metric $(h')^{\vee}\otimes h''$, the one given by the Hermitian metrics $h'$ and $h''$ on the vector bundles $M'$ and $N'$. So we also have a Hermitian metric $h^{(\mathcal{M},\mathcal{N})}$ on the vector bundle associated to $\mathcal{H}om_{\mathcal{A}}(\mathcal{M},\mathcal{N})$. This construction defines the Hermitian vector bundle
\begin{equation*}
\overline{\mathcal{H}om_{\mathcal{A}}(\mathcal{M},\mathcal{N})}:=(\mathcal{H}om_{\mathcal{A}}(\mathcal{M},\mathcal{N}),h^{(\mathcal{M},\mathcal{N})}).
\end{equation*}
The trace map and the tensor-hom adjunction give the following natural isomorphism of locally free $\mathcal{O}_X$-modules:
\begin{align*}
\mathcal{A}\otimes\mathcal{H}om_{\mathcal{A}}(\mathcal{M},\mathcal{N})&\cong \mathcal{A}^{\vee}\otimes\mathcal{H}om_{\mathcal{A}}(\mathcal{M},\mathcal{N})\\
&\cong\mathcal{H}om_{\mathcal{A}}(\mathcal{A}\otimes\mathcal{M},\mathcal{N})\\
&\cong\mathcal{H}om_{\mathcal{O}_X}(\mathcal{M},\mathcal{H}om_\mathcal{A}(\mathcal{A},\mathcal{N}))\cong \mathcal{H}om_{\mathcal{O}_X}(\mathcal{M},\mathcal{N}).
\end{align*}

\begin{lem}\label{natiso}
The natural isomorphism $\mathcal{A}\otimes\mathcal{H}om_{\mathcal{A}}(\mathcal{M},\mathcal{N})\rightarrow \mathcal{H}om_{\mathcal{O}_X}(\mathcal{M},\mathcal{N})$ induces an isometry of Hermitian vector bundles
\begin{equation*}
\overline{\mathcal{A}\otimes\mathcal{H}om_{\mathcal{A}}(\mathcal{M},\mathcal{N})}\xrightarrow{\sim} \overline{\mathcal{H}om_{\mathcal{O}_X}(\mathcal{M},\mathcal{N})}.
\end{equation*}
\end{lem}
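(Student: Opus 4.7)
My plan is to reduce the isometry claim to a metric computation on the Riemann surface $S$ via Morita equivalence, and then apply Lemma \ref{dual} together with Remark \ref{twist}. Since every metric in sight is defined on the underlying vector bundles over $S$, it suffices to check that the map is an isometry there. From the Morita data recalled before the lemma, on $S$ we have $A \cong E^{\vee}\otimes E$ carrying $h^{\mathcal{A}}=h^{\vee}\otimes h$, the sheaf $\mathcal{H}om_{\mathcal{A}}(\mathcal{M},\mathcal{N})$ corresponds to $(M')^{\vee}\otimes N'$ with metric $(h')^{\vee}\otimes h''$, and $\mathcal{H}om_{\mathcal{O}_X}(\mathcal{M},\mathcal{N})$ corresponds to $M^{\vee}\otimes N = (E\otimes M')^{\vee}\otimes(E\otimes N')$ with metric $(h\otimes h')^{\vee}\otimes(h\otimes h'')$.

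The task is therefore to show that the natural identification
\begin{equation*}
E^{\vee}\otimes E\otimes(M')^{\vee}\otimes N'\;\longrightarrow\;(E\otimes M')^{\vee}\otimes(E\otimes N')
\end{equation*}
carries the source metric $h^{\vee}\otimes h\otimes(h')^{\vee}\otimes h''$ to the target metric $(h\otimes h')^{\vee}\otimes(h\otimes h'')$. The key step is Lemma \ref{dual}, which converts $(h\otimes h')^{\vee}$ isometrically to $h^{\vee}\otimes(h')^{\vee}$ under the canonical map $\alpha$, so the target metric becomes $h^{\vee}\otimes(h')^{\vee}\otimes h\otimes h''$ on $E^{\vee}\otimes(M')^{\vee}\otimes E\otimes N'$. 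A single application of Remark \ref{twist} to swap the two middle factors then transports this to $h^{\vee}\otimes h\otimes(h')^{\vee}\otimes h''$ on $E^{\vee}\otimes E\otimes(M')^{\vee}\otimes N'$, which is exactly the source metric.

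The main obstacle I anticipate is confirming that the composite obtained from this direct tensor rearrangement really agrees with the trace-and-adjunction composite displayed just before the lemma. The trace step $\mathcal{A}\cong \mathcal{A}^{\vee}$ is isometric by Theorem \ref{trace}, while the two tensor-hom adjunctions rely on precisely the natural identifications of $V^{\vee}\otimes W$ with $\mathcal{H}om(V,W)$ that, by construction, were used to define the metrics on the Hom bundles in section \ref{sec1}. Once the diagram chase identifying the two composites is carried out, the metric comparison of the previous paragraph yields the desired isometry.
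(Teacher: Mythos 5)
Your proposal is correct and follows essentially the same route as the paper: reduce via Morita equivalence to a metric comparison of vector bundles on $S$, identify the two sides as $E^{\vee}\otimes E\otimes (M')^{\vee}\otimes N'$ and $(E\otimes M')^{\vee}\otimes E\otimes N'$ with their induced metrics, and conclude by Lemma \ref{dual} and Remark \ref{twist}. Your extra remark about checking that the tensor rearrangement agrees with the trace-and-adjunction composite (via Theorem \ref{trace}) is a point the paper's proof leaves implicit, but it is the same argument.
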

\begin{proof}
Looking at all the isomorphisms and the naturally induced metrics on the vector bundles on $S$, this boils down to the following isometry
\begin{equation*}
(E^{\vee}\otimes E\otimes M'^{\vee}\otimes N',h^{\vee}\otimes h\otimes(h')^\vee\otimes h'')\cong ((E\otimes M')^{\vee}\otimes E\otimes N',(h\otimes h')^{\vee}\otimes h\otimes h'')
\end{equation*}
which follows from \ref{dual} and \ref{twist}.
\end{proof}

\begin{cor}\label{cha}
Let $X$ be an arithmetic surface and let $\overline{\mathcal{A}}$ be a Hermitian Azumaya algebra on $X$, if $\overline{\mathcal{M}}$ and $\overline{\mathcal{N}}$ are Hermitian Azumaya modules, the we have the following equality:
\begin{equation*}
\widehat{ch}(\overline{\mathcal{A}})\,\widehat{ch}(\overline{\mathcal{H}om_{\mathcal{A}}(\mathcal{M},\mathcal{N})})=\widehat{ch}(\overline{\mathcal{H}om_{\mathcal{O}_X}(\mathcal{M},\mathcal{N})})
\end{equation*}
\end{cor}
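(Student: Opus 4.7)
The plan is to deduce this corollary directly from Lemma \ref{natiso} together with the standard properties of the arithmetic Chern character, so no new computation is really needed.

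First I would invoke the fact that the arithmetic Chern character is an isometry invariant: if two Hermitian vector bundles are isometrically isomorphic as Hermitian sheaves on $X$, their arithmetic Chern characters in $\widehat{CH}(X)_\mathbb{Q}$ coincide. This is part of the basic package of the Gillet--Soul\'e theory, as used in \cite{soul}. Applying this to the isometry supplied by Lemma \ref{natiso} gives
\begin{equation*}
\widehat{ch}\bigl(\overline{\mathcal{A}\otimes\mathcal{H}om_{\mathcal{A}}(\mathcal{M},\mathcal{N})}\bigr)=\widehat{ch}\bigl(\overline{\mathcal{H}om_{\mathcal{O}_X}(\mathcal{M},\mathcal{N})}\bigr).
\end{equation*}

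Second, I would use the multiplicativity of $\widehat{ch}$ under tensor products of Hermitian vector bundles equipped with the tensor product metric. This is again a standard property in arithmetic intersection theory (see for instance the results cited from \cite{soul}), and applies here because the metric $h^{\mathcal{A}}\otimes h^{(\mathcal{M},\mathcal{N})}$ that we put on $\mathcal{A}\otimes\mathcal{H}om_{\mathcal{A}}(\mathcal{M},\mathcal{N})$ is precisely the tensor product of the metrics on the two factors. Hence
\begin{equation*}
\widehat{ch}\bigl(\overline{\mathcal{A}\otimes\mathcal{H}om_{\mathcal{A}}(\mathcal{M},\mathcal{N})}\bigr)=\widehat{ch}(\overline{\mathcal{A}})\,\widehat{ch}\bigl(\overline{\mathcal{H}om_{\mathcal{A}}(\mathcal{M},\mathcal{N})}\bigr).
\end{equation*}
Combining the two equalities yields the claim.

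There is no real obstacle: the only thing one has to check is that the metric on the left-hand bundle in Lemma \ref{natiso}, built from $h$, $h^{\vee}$, $h'$, $h''$, factors as the tensor product of the metric $h^{\mathcal{A}}$ on $\mathcal{A}$ and the metric $h^{(\mathcal{M},\mathcal{N})}$ on $\mathcal{H}om_{\mathcal{A}}(\mathcal{M},\mathcal{N})$. But this is immediate from the definition of $h^{\mathcal{A}}=h^{\vee}\otimes h$ and $h^{(\mathcal{M},\mathcal{N})}=(h')^{\vee}\otimes h''$ given in Section \ref{sec2} and just before Lemma \ref{natiso}, so the application of the multiplicativity of $\widehat{ch}$ is legitimate.
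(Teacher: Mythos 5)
Your proposal is correct and follows exactly the route of the paper's own (very terse) proof: apply Lemma \ref{natiso} to identify the two Hermitian bundles up to isometry, then use isometry invariance and multiplicativity of $\widehat{ch}$ for tensor-product metrics. Your added check that the metric on $\mathcal{A}\otimes\mathcal{H}om_{\mathcal{A}}(\mathcal{M},\mathcal{N})$ really is the tensor product of $h^{\mathcal{A}}$ and $h^{(\mathcal{M},\mathcal{N})}$ is a welcome explicit detail that the paper leaves implicit.
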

\begin{proof}
By \ref{natiso} the Hermitian vector bundles $\overline{\mathcal{A}\otimes\mathcal{H}om_{\mathcal{A}}(\mathcal{M},\mathcal{N})}$ and $\overline{\mathcal{H}om_{\mathcal{O}_X}(\mathcal{M},\mathcal{N})}$ are isometric, so the properties of the arithmetic Chern classes give the desired result. 
\end{proof}

\begin{defi}
Let $X$ be an arithmetic surface and let $\overline{\mathcal{A}}$ be a Hermitian Azumaya algebra on $X$, then we define the arithmetic $\mathcal{A}$-Chern character of a Hermitian Azumaya module $\overline{\mathcal{M}}$ by:
\begin{equation*}
\widehat{ch}^{\mathcal{A}}(\overline{\mathcal{M}}):=\widehat{ch}(\overline{\mathcal{M}})\,\widehat{ch}(\overline{\mathcal{A}})^{-\frac{1}{2}}
\end{equation*}
and the first arithmetic $\mathcal{A}$-Chern class by
\begin{equation*}
\widehat{c}_1^{\mathcal{A}}(\overline{\mathcal{M}}):=(\widehat{ch}^{\mathcal{A}}(\overline{\mathcal{M}}))^{(1)}.
\end{equation*}
\end{defi}

\begin{rem}\label{form}
By the definition of the arithmetic $\mathcal{A}$-Chern character and by the choice of the induced metrics on all sheaves involved, we see:
\begin{equation*}
\widehat{ch}(\overline{\mathcal{H}om_{\mathcal{A}}(\mathcal{M},\mathcal{N})})=\widehat{ch}^{\mathcal{A}}(\overline{\mathcal{M}})\,\widehat{ch}^{\mathcal{A}}(\overline{\mathcal{N}}).
\end{equation*}
Explicitly we have
\begin{equation*}
\widehat{ch}(\overline{\mathcal{A}})^{-\frac{1}{2}}=\frac{1}{\sqrt{rk(\mathcal{A})}}-\frac{1}{2\sqrt{rk(\mathcal{A})^3}}\widehat{c}_2(\overline{\mathcal{A}}).
\end{equation*}
This computation shows: 
\begin{equation*}
\widehat{c}_1^{\mathcal{A}}(\overline{\mathcal{M}})=\frac{1}{\sqrt{rk(\mathcal{A})}}\widehat{c}_1(\overline{\mathcal{M}}).
\end{equation*}
A similar explicit computation is possible for $\widehat{ch}^{\mathcal{A}}(\overline{\mathcal{M}})$. 

Since $\mathcal{A}$ is an Azumaya algebra, $rk(\mathcal{A})$ is a square, so all these classes are well defined in $\widehat{CH}(X)_\mathbb{Q}$.
\end{rem}

Now we want to compute some arithmetic Chern classes, using the arithmetic Riemann-Roch theorem, due to Gillet and Soul\'e, see \cite{soul2}. As $X$ is an arithmetic surface, we have  $dim(X)-dim(Y)=1$, so we can use the following version of the arithmetic Riemann Roch theorem for a Hermitian vector bundle $\overline{\mathcal{E}}$ on $X$, see \cite[Conjecture 1.5]{soul5}:

\begin{equation}\label{arr}
\widehat{c}_1(\overline{\lambda(\mathcal{E})})=\pi_{*}(\widehat{ch}(\overline{\mathcal{E}})\widehat{Td}(\overline{\mathcal{T}_{X/Y}}))^{(1)}-a(rk(\mathcal{E})(1-g)(4\zeta'(-1)-\frac{1}{6}))
\end{equation}
Here $\overline{\lambda(\mathcal{E})}$ is the 'determinant of the cohomology', a Hermitian line bundle on $Y$ defined by 
\begin{center}
$\lambda(\mathcal{E}):=\bigotimes\limits_{i\geq 0}det(R^i\pi_{*}\mathcal{E})^{(-1)^i}$, 
\end{center}
and the line bundle $\lambda(\mathcal{E})$ is equipped with the Quillen metric $h_Q$ induced by the Hermitian metric on $\mathcal{E}$. Furthermore $g$ is the genus of $X_\mathbb{Q}$ and $\zeta'(-1)$ is the value of the derivative of the Riemann zeta function at $-1$. 

\begin{defi}
Let $X$ be an arithmetic surface and $\overline{\mathcal{A}}$ be a Hermitian Azumaya algebra on $X$. If $(\overline{\mathcal{M}},\overline{\mathcal{N}})$ is a pair of Hermitian Azumaya modules, then we define the $\mathcal{A}$-Deligne pairing of the pair $\overline{\left\langle \mathcal{M},\mathcal{N}\right\rangle_{\mathcal{A}}}$ as the Hermitian line bundle on $Y$ given by:
\begin{equation*}
\overline{\left\langle \mathcal{M},\mathcal{N}\right\rangle_{\mathcal{A}}}:=\overline{\lambda_{\mathcal{A}}(\mathcal{M},\mathcal{N})}\otimes \overline{\lambda_{\mathcal{A}}(\mathcal{M},\mathcal{A})}^{(-1)}\otimes \overline{\lambda_{\mathcal{A}}(\mathcal{A},\mathcal{N})}^{(-1)}\otimes \overline{\lambda_{\mathcal{A}}(\mathcal{A},\mathcal{A})}\,,
\end{equation*}
where
\begin{equation*}
\overline{\lambda_{\mathcal{A}}(\mathcal{M},\mathcal{N})}=\overline{\lambda(\mathcal{H}om_{\mathcal{A}}(\mathcal{M},\mathcal{N}))}.
\end{equation*}
\end{defi}

\begin{thm}
Let $X$ be an arithmetic surface and let $\overline{\mathcal{A}}$ be a Hermitian Azumaya algebra on $X$. If $\overline{\mathcal{M}}$ and $\overline{\mathcal{N}}$ are Hermitian $\mathcal{A}$-line bundles, that is $rk(\mathcal{A})=rk(\mathcal{M})=rk(\mathcal{N})$, then there is the following equality in $\widehat{CH^1}(X)_\mathbb{Q}$: 
\begin{equation*}
\widehat{c}_1(\overline{\left\langle \mathcal{M},\mathcal{N} \right\rangle_\mathcal{A} })=-\pi_{*}(\widehat{c}_1^{\mathcal{A}}(\overline{\mathcal{M}})\,\widehat{c}_1^{\mathcal{A}}(\overline{\mathcal{N}})).
\end{equation*}
\end{thm}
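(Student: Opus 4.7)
The plan is to apply the arithmetic Riemann--Roch theorem (\ref{arr}) to each of the four determinants of cohomology making up $\overline{\left\langle \mathcal{M},\mathcal{N}\right\rangle_{\mathcal{A}}}$, form the alternating sum, and simplify. A first observation: because $\mathcal{M}$, $\mathcal{N}$, and $\mathcal{A}$ all share the $\mathcal{O}_X$-rank $r:=rk(\mathcal{A})$, the four sheaves $\mathcal{H}om_{\mathcal{A}}(\mathcal{M},\mathcal{N})$, $\mathcal{H}om_{\mathcal{A}}(\mathcal{M},\mathcal{A})$, $\mathcal{H}om_{\mathcal{A}}(\mathcal{A},\mathcal{N})$, $\mathcal{H}om_{\mathcal{A}}(\mathcal{A},\mathcal{A})$ all have $\mathcal{O}_X$-rank $r$ as well. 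With the signs $+,-,-,+$ in the definition of the $\mathcal{A}$-Deligne pairing, the anomalous Quillen-metric term $-a(r(1-g)(4\zeta'(-1)-\tfrac{1}{6}))$ in (\ref{arr}) therefore cancels outright, leaving only $\pi_{*}$ applied to the alternating combination of $\widehat{ch}(\overline{\mathcal{H}om_{\mathcal{A}}(-,-)})\,\widehat{Td}(\overline{\mathcal{T}_{X/Y}})$, read off in the right degree.

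Next, I would use Corollary \ref{cha} to rewrite each $\widehat{ch}(\overline{\mathcal{H}om_{\mathcal{A}}(\mathcal{F},\mathcal{G})})$ as $\widehat{ch}(\overline{\mathcal{F}})^{\vee}\,\widehat{ch}(\overline{\mathcal{G}})\,\widehat{ch}(\overline{\mathcal{A}})^{-1}$. The alternating sum then factors as
\begin{equation*}
\bigl(\widehat{ch}(\overline{\mathcal{M}})^{\vee}-\widehat{ch}(\overline{\mathcal{A}})^{\vee}\bigr)\bigl(\widehat{ch}(\overline{\mathcal{N}})-\widehat{ch}(\overline{\mathcal{A}})\bigr)\widehat{ch}(\overline{\mathcal{A}})^{-1}.
\end{equation*}
Distributing one copy of $\widehat{ch}(\overline{\mathcal{A}})^{-1/2}$ into each parenthesized factor turns this into
\begin{equation*}
\bigl(\widehat{ch}^{\mathcal{A}}(\overline{\mathcal{M}})^{\vee}-\widehat{ch}^{\mathcal{A}}(\overline{\mathcal{A}})\bigr)\bigl(\widehat{ch}^{\mathcal{A}}(\overline{\mathcal{N}})-\widehat{ch}^{\mathcal{A}}(\overline{\mathcal{A}})\bigr),
\end{equation*}
where I used $\widehat{ch}^{\mathcal{A}}(\overline{\mathcal{A}})^{\vee}=\widehat{ch}^{\mathcal{A}}(\overline{\mathcal{A}})$. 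This self-duality holds on a surface because $\widehat{c}_1(\overline{\mathcal{A}})=0$ in $\widehat{CH^1}(X)_{\mathbb{Q}}$ by the theorem of Section \ref{sec2}, making the only potentially odd-degree component of $\widehat{ch}(\overline{\mathcal{A}})$ vanish.

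Finally I would read off the top-degree component. Both parenthesized factors have vanishing degree-$0$ part (each starts at $\sqrt{r}$), while their degree-$1$ parts reduce to $-\widehat{c}_1^{\mathcal{A}}(\overline{\mathcal{M}})$ and $\widehat{c}_1^{\mathcal{A}}(\overline{\mathcal{N}})$ respectively, once one again uses $\widehat{c}_1^{\mathcal{A}}(\overline{\mathcal{A}})=0$ rationally. Hence the factored expression starts in degree $2$ with $-\widehat{c}_1^{\mathcal{A}}(\overline{\mathcal{M}})\,\widehat{c}_1^{\mathcal{A}}(\overline{\mathcal{N}})$, and multiplication by $\widehat{Td}(\overline{\mathcal{T}_{X/Y}})=1+\dots$ leaves this leading term unchanged. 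Applying $\pi_{*}$ yields the asserted identity.

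The main bookkeeping task is the factorization step and the sign tracking: the minus sign of the theorem is exactly the sign produced by the duality $\widehat{c}_1(\overline{\mathcal{F}^{\vee}})=-\widehat{c}_1(\overline{\mathcal{F}})$ sitting on the first factor via Corollary \ref{cha}. Everything else is a clean collapse that depends crucially on the earlier theorem $\widehat{c}_1(\overline{\mathcal{A}})=0$; without rational vanishing of $\widehat{c}_1^{\mathcal{A}}(\overline{\mathcal{A}})$, extra degree-$1$ corrections would contaminate both factors and the right-hand side would not appear as a clean product $\widehat{c}_1^{\mathcal{A}}(\overline{\mathcal{M}})\,\widehat{c}_1^{\mathcal{A}}(\overline{\mathcal{N}})$.
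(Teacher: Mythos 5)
Your proposal is correct and follows essentially the same route as the paper: apply the arithmetic Riemann--Roch theorem to the four determinants of cohomology, observe that the analytic terms cancel because all four Hom-sheaves have rank $rk(\mathcal{A})$, factor the alternating sum of Chern characters as $(\widehat{ch}(\overline{\mathcal{M}^{\vee}})-\widehat{ch}(\overline{\mathcal{A}^{\vee}}))(\widehat{ch}(\overline{\mathcal{N}})-\widehat{ch}(\overline{\mathcal{A}}))\widehat{ch}(\overline{\mathcal{A}})^{-1}$, and extract the degree-two component. Your explicit appeal to the rational vanishing of $\widehat{c}_1(\overline{\mathcal{A}})$ is a point the paper uses only implicitly, and it is indeed needed for the clean collapse to $-\widehat{c}_1^{\mathcal{A}}(\overline{\mathcal{M}})\,\widehat{c}_1^{\mathcal{A}}(\overline{\mathcal{N}})$.
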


\begin{proof}
By the properties of $\widehat{c}_1$ we have:
\begin{equation*}
\widehat{c}_1(\overline{\left\langle \mathcal{M},\mathcal{N} \right\rangle_\mathcal{A} })=\widehat{c}_1(\overline{\lambda_{\mathcal{A}}(\mathcal{M},\mathcal{N})})- \widehat{c}_1(\overline{\lambda_{\mathcal{A}}(\mathcal{M},\mathcal{A})})- \widehat{c}_1(\overline{\lambda_{\mathcal{A}}(\mathcal{A},\mathcal{N})})+\widehat{c}_1(\overline{\lambda_{\mathcal{A}}(\mathcal{A},\mathcal{A})}).
\end{equation*}

Using the arithmetic Riemann-Roch theorem \ref{arr}, \ref{cha} and \ref{form} one gets:
\begin{equation*}
\widehat{c}_1(\overline{\left\langle \mathcal{M},\mathcal{N} \right\rangle_\mathcal{A} })=\pi_{*}((\widehat{ch}(\overline{\mathcal{M}^{\vee}})-\widehat{ch}(\overline{\mathcal{A}^{\vee}}))(\widehat{ch}(\overline{\mathcal{N}})-\widehat{ch}(\overline{\mathcal{A}}))\widehat{ch}(\overline{\mathcal{A}})^{-1}\widehat{Td}(\overline{\mathcal{T}_{X/Y}}))^{(1)}
\end{equation*}
since the analytic terms of the form $a(-)$ cancel each other.

Using the definitions of $\widehat{ch}$ one computes:
\begin{equation*}
\widehat{c}_1(\overline{\left\langle \mathcal{M},\mathcal{N} \right\rangle_\mathcal{A}})= \pi_{*}(\widehat{c}_1(\overline{\mathcal{M}^{\vee}})\widehat{c}_1(\overline{\mathcal{N}})\widehat{ch}(\overline{\mathcal{A}})^{-1}\widehat{Td}(\overline{\mathcal{T}_{X/Y}}))^{(1)}
\end{equation*}
since $rk(\mathcal{A})=rk(\mathcal{M})=rk(\mathcal{N})$.

So using the definition of $\widehat{c}_1^{\mathcal{A}}$ and $\widehat{Td}$ we finally get:
\begin{equation*}
\widehat{c}_1(\overline{\left\langle \mathcal{M},\mathcal{N} \right\rangle_\mathcal{A}})= -\pi_{*}(\widehat{c}_1^{\mathcal{A}}(\overline{\mathcal{M}})\widehat{c}_1^{\mathcal{A}}(\overline{\mathcal{N}})).
\end{equation*}
\end{proof}

\begin{rem}
The sign in the formula is not surprising, since the Deligne pairing for Hermitian Azumaya modules is the dual of the classical Deligne pairing in the case $\overline{\mathcal{A}}=\overline{\mathcal{O}_X}$, see \cite[Theorem 2.7.]{reede}.
\end{rem}

\bibliography{Artikel}
\bibliographystyle{alphaurl}

\end{document}